\newtheorem{thm}{Theorem}[section]
\newtheorem{cor}[thm]{Corollary}
\newtheorem{lem}[thm]{Lemma}
\theoremstyle{definition}
\newtheorem{exmp}{Example}[section]
\theoremstyle{remark}
\newtheorem{rem}{Remark}[section]
\numberwithin{equation}{section}
\begin{document}

\title[On approximation of solutions]{On approximation of solutions to the heat 
equation from Lebesgue class $L^2$ by more regular solutions}

\author{A.A. Shlapunov}

\address[Alexander Shlapunov]
{Siberian federal university
                                                 \\
         pr. Svobodnyi, 79
                                                 \\
         660041 Krasnoyarsk
                                                 \\
         Russia}
\email{ashlapunov@sfu-kras.ru}

\address{Sirius Mathematics Center \\
Sirius University of Science and Technology
                                                 \\
          Olimpiyskiy ave. b.1
                                                 \\
         354349 Sochi
                                                 \\
         Russia}
\email{shlapunov.aa@talantiuspeh.ru}

\subjclass {Primary 35B25; Secondary 35J60}
\keywords{The heat equation, approximation theorems}
\begin{abstract}
Let $s \in {\mathbb N}$,   
$T_1,T_2 \in {\mathbb R}$, $T_1<T_2$, and $\Omega, \omega $ be bounded domains
in ${\mathbb R}^n$, $n \geq 1$, such that  $\omega \subset \Omega$ and 
the complement $\Omega \setminus \omega$ has no (non-empty) 
compact  components in  $\Omega$. We prove that this is the necessary and sufficient 
condition for the  space $H^{2s,s}  _{\mathcal H} (\Omega \times (T_1,T_2))$ of 
solutions to the heat operator  ${\mathcal H} $ in a cylinder domain 
$\Omega \times (T_1,T_2)$ from the anisotropic Sobolev  space   
$H^{2s,s}  (\Omega \times (T_1,T_2))$  to be dense in the space 
$L^{2} _{\mathcal H}(\omega \times (T_1,T_2))$,  consisting of solutions in the domain 
$\omega \times (T_1,T_2)$ from the Lebesgue class  $L^{2}  (\omega \times (T_1,T_2))$. 
As an important corollary we obtain the theorem on the existence of a basis 
with the double orthogonality property for the pair of the Hilbert spaces 
$H^{2s,s}  _{\mathcal H} (\Omega \times (T_1,T_2))$ and $L^{2} 
_{\mathcal H}(\omega \times (T_1,T_2))$ .
\end{abstract}

\maketitle

\section*{Introduction}
\label{s.Int}

The problem of the uniform approximation on subcompacts of a domain in ${\mathbb R}^{n+1}$
of solutions to the heat equation was solved in the 
papers by \cite{J}, \cite{D80} (see also \cite{GauTa10} for 
some refinement related to the so-called rational approximation). 
It appeared that for this purpose one may use an approach quite similar 
to   the  Runge type approximations of solutions to an elliptic 
system in a lesser domain by solutions in a bigger domain 
 (these include the theorem  by C. Runge  \cite{R1885} for the holomorphic 
functions, the theorem by S.N. Mergelyan  \cite{Merg56} for the harmonic functions
and their  generalizations for  spaces of solutions 
to  various systems of partial differential equations (see, for instance, 
\cite{Mal56} for operators with constant coefficients
or \cite[ch. 4, ch. 5]{Tark37} for elliptic operators with sufficiently smooth 
coefficients). More precisely, the key assumption, providing 
that the space  $S _{\mathcal H}(D)$ of solutions to the heat operator  
${\mathcal H}$  to be dense in the space  
$S _{\mathcal H}(D')$ for a pair of domains $D'\subset D \subset {\mathbb R}^{n+1}$, 
is the absence of compact  components of  $D_t \setminus D'_t$ in $D_t$ for any sections 
$D_t, D'_t$ of the domains $D$ and $D'$, respectively, by hyperplanes parallel 
to  the coordinate hyperplane $\{ \tau=0 \}$ in ${\mathbb R}^{n+1}$ 
and  containing the point $t$. 

However, more delicate approximation problems  
appeared in the middle of the last century in the theory of the analytic functions.
They were related to the approximation in the function spaces where the behaviour 
of the elements are controlled up to the boundaries of the considered sets, see, 
for instance, pioneer papers by  A.G. Vitushkin \cite{V67} and V.P. Havin \cite{H68}.  
Later on it was discovered that these problems of approximation by 
solutions of various differential equations are closely 
related to the theory of non-linear potential, see, for instance,
\cite{Hedb86}.

In the present paper we will discuss approximation of solutions to the 
heat equation from the normed Lebesgue space  $L^{2} _{\mathcal H}(D)$. 
This imposes certain restrictions on the domain 
 $D \subset {\mathbb R}^{n+1}$. 
To simplify the exposition we will concentrate our efforts on 
cylinder domains with the forming side surfaces  parallel to the time axis.

One of the reason to consider this problem is the need to construct bases 
with the double orthogonality property related the spaces of solutions 
to the heat equations in a pair cylinder domains. We recall that systems with 
the double orthogonality property are used to investigate operator equations 
in Hilbert spaces since the middle of the XX-th century,  see, for instance, 
\cite{Kras68}. They were especially useful in the situations where 
the linear operator $A: H_1 \to H_2$, acting between Hilbert spaces $ H_1, H_2$, 
is injective, compact and it has a dense image. In this case, the basis with the double 
orthogonality property with respect to two inner products   
$(\cdot, \cdot)_{H_1}$ and $(A\cdot, A\cdot)_{H_2}$ 
on the space $ H_1$, is the complete system of eigen-vectors of the compact self-adjoint 
operator $A^* A: H_1 \to H_1$. This allows to construct 
regularising operators  for the operator equation $Au=f$ with given 
$f\in H_2$ and looked for  $u \in H_1$, 
see, for instance,   \cite{TikhArsX}, %\cite{LaRoSh80}, 
\cite[ch. 12]{Tark36}. 

At the first part of the  XX-th century, long before the formation of the this standard 
scheme of the functional analysis,  S. Bergman suggested to use such systems for spaces 
of holomorphic functions in the problem of the analytic continuation from 
a lesser plane domain to a bigger one (see later exposition by \cite{Berg70}). 
Later, in  1980', L. Aizenberg, using integral representation method, reduced the Cauchy 
problem for holomorphic functions (of one and several variables) to the problem 
of the analytic continuation. This opened the way for applying the systems with the double 
orthogonality for the construction of the Carleman formulas, see \cite{AKy}. 
This approach was successfully use in order to investigate the Cauchy problem for a wide 
class of elliptic (including the overdetermined ones) systems with real analytic coefficients,
 see  \cite[ch. 10, ch. 12]{Tark36}, %\cite{ShSMZh},  
\cite{ShTaLMS}, %\cite{ShZamm},
and even to elliptic differential complexes, see  \cite{FeSh2}). Taking in the account 
the  requirements described above for a continuous operator
$A: H_1 \to H_2$, the key issues for this theory were the Uniqueness Theorems for 
solutions to elliptic systems  with real analytic coefficients, 
providing its injectivity, the Sobolev Embedding Theorems, Rellich-Kondrashov 
Theorem and/or Stiltjes-Vitaly Theorem, guaranteeing
its compactness, and the Runge type Theorems on the approximation of solutions 
in a lesser domain by the solutions in a bigger one. 

It appeared that the Cauchy problem for the heat operator  
${\mathcal H} = \frac{\partial}{\partial t} - %\mu 
\sum_{j=1}^n \frac{\partial^2}{\partial x^2_j}$ in a cylinder domain 
 ${\mathbb R}^n \times \mathbb R$ with the Cauchy data on a part of its 
lateral surface (that naturally arises in the diffusion problems, for instance 
in the inverse problem of the electrocardiography using models of the charge diffusion 
in the heart tissues) can be also reduced to the continuation problem for solutions 
of the heat equations from a lesser cylinder domain to a bigger one, see %\cite{PuSh12}, 
\cite{MMT17}, \cite{KuSh}. Since the solutions to the heat equation are real analytic 
with respect to the space variables  (see, for instance,  
\cite[ch. VI, \S 1, theorem 1]{MikhX}), then for domains 
$\omega \subset \Omega \subset {\mathbb R}^n$ and numbers
 $T_1,T_2 \in {\mathbb R}$, $T_1<T_2$, the natural embedding operator 
$$
A: H^{2s,s}  _{\mathcal H} (\Omega \times (T_1,T_2)) \to 
H^{2s',s'} _{\mathcal H}(\omega \times (T_1,T_2)), \,\, s\geq s', s,s'\in {\mathbb Z}_+, 
$$ 
between anisotropic spaces 
 $H^{2s,s}  _{\mathcal H} (\Omega \times (T_1,T_2))$ and  
$H^{2s',s'} _{\mathcal H}(\omega \times (T_1,T_2))$,  consisting of solutions to the 
heat operator  ${\mathcal H}$ from the  Sobolev class 
$H^{2s,s}  (\Omega \times (T_1,T_2))$ in a cylinder domain 
$\Omega \times (T_1,T_2) \subset {\mathbb R}^{n+1}$ 
and the Sobolev $H^{2s',s'}  (\omega \times (T_1,T_2))$ in the domain 
$\omega \times (T_1,T_2)$, respectively, is injective. The compactness of the 
operator $A$ may be easily extracted from general embedding theorems 
for anisotropic Sobolev type spaces, see, for instance, 
\cite[ch. III and ch. VI]{BIN}, or from the results by 
J.-L. Lions \cite[ch. 1, \S 5]{Lion69}, see \S \ref{s.ex} below. 
As for the density of the range of the operator $A$, it is precisely connected with the 
approximation theorems for the solutions to the heat equation.

\section{A density theorem}
\label{s.2}

Let ${\mathbb R}^n$ be $n$-dimensional Euclidean space with the coordinates 
$x=(x_1, \dots , x_n)$, and  $\Omega \subset {\mathbb R}^n$ be a bounded domain.
As usual, denote by  $\overline{\Omega}$ the closure of $\Omega$, and by
 $\partial\Omega$ its boundary. We assume that  $\partial \Omega$ 
is piece-wise smooth hypersurface. We denote also by 
$\Omega_{T_1,T_2}$ a bounded open cylinder  
 $\Omega \times (T_1,T_2)$ in ${\mathbb R}^{n+1}$ with $T_1<T_2$. 
 
We consider functions over  ${\mathbb R}^n$ and 
${\mathbb R}^{n+1}$.  For $s \in {\mathbb Z}_+$, we denote by $C^s(\Omega)$ 
the space of all $s$-times continuously differentiable functions  
over $\Omega$, and by $C^s(\overline \Omega)$ the subset of   
$C^s(\Omega)$ such that for each function $u \in C^s(\overline \Omega)$ and each 
multi-index  $\alpha = (\alpha_1, \dots , \alpha_n )
\in {\mathbb Z}^n_+$ there is a function $u_\alpha$, 
continuous on  $\overline \Omega$ and satisfying $\partial ^\alpha u = u_\alpha$ 
in  $\Omega$. Besides, let $L^2 (\Omega)$ be the Lebesgue space 
over $\Omega$ with the standard inner product  
$
(u,v)_{L^2 (\Omega)} %= \int_{\Omega} v(x) u(x)\, dx, 
$, 
and $H^s (\Omega)$ be the Sobolev space, $s\in \mathbb N$, 
with the standard inner product  
$
(u,v)_{H^s (\Omega)}
$.  
As it is well known, both $L^2 (\Omega)$ and $H^s (\Omega)$ are  Hilbert spaces.

Investigating spaces of solutions to the heat equation, we need 
the anisotropic Sobolev spaces $H^{2s,s} (\Omega_{T_1,T_2})$, $s \in  {\mathbb Z}_+$, see, 
for instance, %\cite[гл. 1]{LadSoUr67}, 
\cite[ch. 2]{Kry08}, i.e. the set of all measurable functions 
 $u$ over $\Omega_{T_1,T_2}$ such that (generalised) partial derivatives  
$\partial^j_t \partial^{\alpha}_x u$ belong to the Lebesgue space $L^{2} (\Omega_{T_1,T_2})$
for all multi-indexes $(\alpha,j) 
\in {\mathbb Z}_+^{n} \times {\mathbb Z}_+$ with $|\alpha|+2j \leq 2s$. 
This is a Hilbert space with the inner product 
\begin{equation*} %\label{eq.Hs}
(u,v)_{H^{2s,s} (\Omega_{T_1,T_2})} = \sum_{|\alpha|+2j \leq 2s}
\int_{\Omega_{T_1,T_2}} \partial^j_t \partial^{\alpha}_x v 
(x,t) \, \partial^j_t \partial^{\alpha}_x u (x,t) dx dt.
\end{equation*}
For $s=0$ we obtain  $H^{0,0} (\Omega_{T_1,T_2}) = L^{2} (\Omega_{T_1,T_2})$. 
In particular,  
$C^{\infty} (\overline{\Omega_{T_1,T_2}})$ can be considered as the intersection 
 $\cap _{s=0}^\infty H^{2s,s} (\Omega_{T_1,T_2})$. 

Finally, for $k \in {\mathbb Z}_+$, we denote by $H^{k,2s,s} (\Omega_{T_1,T_2})$ 
the set of all functions  $u \in H^{2s,s} (\Omega_{T_1,T_2})$ such that 
 $\partial ^\beta_x u \in H^{2s,s} (\Omega_{T_1,T_2})$ for all  $|\beta|\leq k$. 
This is a Hilbert space with the inner product 
\begin{equation*} % \label{eq.Hs.k}
(u,v)_{H^{k,2s,s} (\Omega_{T_1,T_2})} = \sum_{|\beta| \leq k}
(\partial ^\beta u, \partial ^\beta v)_{H^{2s,s} (\Omega_{T_1,T_2})}.
\end{equation*}
We also will use the so-called Bocher spaces 
of functions depending on $(x,t)$ from the strip  
$\mathbb{R}^n \times  [T_1,T_2]$.
Namely, for a Banach space $\mathcal B$ (for example, the space of functions 
on a subdomain of $\mathbb{R}^n$) and    $p \geq 1$, we denote by  
$L^p (I,{\mathcal B})$ the Banach space of all the measurable mappings 
  $u : [T_1,T_2] \to {\mathcal B}$
with the finite norm  
$$
   \| u \|_{L^p ([T_1,T_2],{\mathcal B})}
 := \| \|  u (\cdot,t) \|_{\mathcal B} \|_{L^p ([T_1,T_2])},
$$
see, for instance, \cite[ch. \S 1.2]{Lion69},  \cite[ch.~III, \S~1]{Tema79}. 

The space $C ([T_1,T_2],{\mathcal B})$ is introduced with the use of the 
same scheme; this is the Banach space of all the measurable mappings
$u : [T_1,T_2] \to {\mathcal B}$ with the finite norm 
$$
   \| u \|_{C ([T_1,T_2],{\mathcal B})}
 := \sup_{t \in [T_1,T_2]} \| u (\cdot,t) \|_{\mathcal B}.
$$

Now let $S _{\mathcal H}(\Omega_{T_1,T_2})$ be the set of all generalised 
functions over $\Omega_{T_1,T_2}$, satisfying 
% $H^{k+2s,s} (\Omega_{T_1,T_2})$, 
the (homogeneous) heat equation 
\begin{equation} \label{eq.heat}
{\mathcal H} u = 0 \mbox{ in } \Omega_{T_1,T_2} 
\end{equation}
in the sense of distributions. First of all, we note that according 
to the hypoellipticity  of the operator $\mathcal H$, solutions to equation 
\eqref{eq.heat} are infinitely differentiable 
on their domains (see, for instance,  \cite[ch. VI, \S 1, theorem 1]{MikhX}), i.e. 
$$
S _{\mathcal H}(\Omega_{T_1,T_2}) \subset C^\infty (\Omega_{T_1,T_2}).
$$
As it is known, this is a closed subspace in the space 
$C^\infty (\omega_{T_1,T_2})$ with the standard Fr\'echet topology (inducing the uniform 
convergence together with all the partial derivatives on compact subsets 
of $\omega_{T_1,T_2}$). 

Also, we need the space  $S _{\mathcal H}(\overline{\Omega_{T_1,T_2}})$, 
defined as the union of the spaces 
$$
%S _{\mathcal H}(\overline{\Omega_{T_1,T_2}}) = 
\cup_{\tilde \Omega_{\tilde T_1,\tilde T_2} \supset 
\overline{\Omega_{T_1,T_2}} } S _{\mathcal H}(\tilde \Omega_{\tilde T_1,\tilde T_2}),
$$
where the union is with respect to all the domains 
$\tilde \Omega_{\tilde T_1,\tilde T_2}$, containing the closure of the domain 
 $\Omega_{T_1,T_2}$. Usually, this space is endowed with the topology 
of the inductive limit associated with a decreasing sequences of neighbourhoods of 
the compact  $\overline{\Omega_{T_1,T_2}}$; however, we will not 
use any topology of this space, considering it as a set of functions. 

Let $H^{k,2s,s} _{\mathcal H}(\Omega_{T_1,T_2}) =
H^{k,2s,s} (\Omega_{T_1,T_2}) \cap S _{\mathcal H}(\Omega_{T_1,T_2}) $, 
$s \in  {\mathbb Z}_+$, $k \in {\mathbb Z}_+$. As it 
is known, this is a closed subspace of the Sobolev space $H^{k,2s,s}(\Omega_{T_1,T_2})$. 
Similarly, $C^{\infty} _{\mathcal H} (\overline{\Omega_{T_1,T_2}}) =
C^{\infty} (\overline{\Omega_{T_1,T_2}}) \cap S _{\mathcal H}(\Omega_{T_1,T_2}) $ 
is a closed subspace, consisting of solutions to equation \eqref{eq.heat}, of the space 
$C^{\infty}  (\overline{\Omega_{T_1,T_2}})$ with the standard Fre\'echet topology. 
The hypoellipticity  of the operator $\mathcal H$ provides 
the following (continuous) embeddings 
\begin{equation} \label{eq.emb.H}
S _{\mathcal H}(\overline{\Omega_{T_1,T_2}}) \subset 
C^{\infty} _{\mathcal H} (\overline{\Omega_{T_1,T_2}}) \subset
H^{k,2s,s} _{\mathcal H}(\Omega_{T_1,T_2}) 
\end{equation} 
for all $k,s\in {\mathbb Z}_+$. 

Now we may formulate the main results of this paper.

\begin{thm} \label{t.dense.base}
If $\omega \subset \Omega \Subset {\mathbb R}^n$, $\partial \omega, \partial \Omega 
\in C^2$ then $S_{\mathcal H}(\overline {\Omega_{T_1,T_2}})$
is everywhere dense in $L^{2} _{\mathcal H}(\omega_{T_1,T_2})$ % при $s\in {\mathbb Z}_+$.  
if and only if the complement $\Omega \setminus  \omega$ has no compact components in 
$\Omega$. 
\end{thm}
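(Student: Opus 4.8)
The plan is to prove this density theorem via functional-analytic duality, reducing it to a uniqueness/approximation statement that I can attack using the known Runge-type theorem for the heat operator mentioned in the introduction. The two directions (necessity and sufficiency) will be handled separately, and I expect the sufficiency direction to be the main obstacle.

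Let me think about the structure.

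**Sufficiency direction (the hard part):** We assume $\Omega \setminus \omega$ has no compact components in $\Omega$, and we want to show $S_{\mathcal H}(\overline{\Omega_{T_1,T_2}})$ is dense in $L^2_{\mathcal H}(\omega_{T_1,T_2})$.

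The natural approach is duality. By Hahn-Banach, density fails iff there's a nonzero continuous linear functional on $L^2_{\mathcal H}(\omega_{T_1,T_2})$ that vanishes on the subspace $S_{\mathcal H}(\overline{\Omega_{T_1,T_2}})$ (restricted to $\omega_{T_1,T_2}$).

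Since $L^2_{\mathcal H}(\omega_{T_1,T_2})$ is a closed subspace of $L^2(\omega_{T_1,T_2})$, any continuous functional is represented by some $v \in L^2(\omega_{T_1,T_2})$ via the inner product (we can take $v$ to be in the closed subspace, i.e., $v \in L^2_{\mathcal H}$). So we need: if $v \in L^2(\omega_{T_1,T_2})$ satisfies
$$\int_{\omega_{T_1,T_2}} u\, \bar{v}\, dx\, dt = 0 \quad \text{for all } u \in S_{\mathcal H}(\overline{\Omega_{T_1,T_2}}),$$
then $v \perp L^2_{\mathcal H}(\omega_{T_1,T_2})$.

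The key idea: extend $v$ by zero to a distribution on $\Omega_{T_1,T_2}$ and study the orthogonality condition. The heat operator $\mathcal{H} = \partial_t - \Delta_x$ has a formal adjoint $\mathcal{H}^* = -\partial_t - \Delta_x$ (the backward heat operator). The orthogonality to all solutions in a neighborhood of $\overline{\Omega_{T_1,T_2}}$ should force a representation of $v$.

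**Potential-theoretic construction:** Consider the potential $V = \Phi * v_0$ where $v_0$ is the zero-extension of $v$ and $\Phi$ is a fundamental solution of the adjoint heat operator $\mathcal{H}^*$. Then $\mathcal{H}^* V = v_0$ in the distributional sense, so $V$ is a solution of the backward heat equation outside $\overline{\omega_{T_1,T_2}}$, and in particular in $\Omega \setminus \overline{\omega}$ (times the time interval).

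The orthogonality condition $\int u \bar v = 0$ for all $u$ solving $\mathcal{H}u = 0$ near $\overline{\Omega_{T_1,T_2}}$ should (via a Green-type identity / integration by parts, using that $u$ is a solution) translate into vanishing of certain boundary/Cauchy data of $V$ on the outer boundary. The absence of compact components of $\Omega \setminus \omega$ in $\Omega$ is exactly the condition guaranteeing that the exterior region is connected to the boundary $\partial\Omega$ — this connectivity lets us propagate the vanishing of $V$ from $\partial\Omega$ inward using the **uniqueness of the Cauchy problem** for the heat equation (backward heat equation) across the region $\Omega \setminus \overline\omega$.

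**The crux:** Here I expect the main difficulty. I need to show $V \equiv 0$ on each component of $(\Omega \setminus \overline\omega) \times (T_1,T_2)$ that touches $\partial\Omega \times (T_1,T_2)$. Because solutions of the heat equation are real-analytic in the space variables $x$ (stated in the excerpt), the backward-heat potential $V$ is analytic in $x$ in the region where $\mathcal{H}^*V = 0$. The "no compact components" hypothesis ensures every component of the complement reaches $\partial\Omega$, and real-analyticity in $x$ (unique continuation) then forces $V=0$ throughout $(\Omega\setminus\overline\omega)\times(T_1,T_2)$. Then, since $V=0$ on a neighborhood of $\partial\omega$ and $\mathcal{H}^*V = v_0 = v$ inside $\omega_{T_1,T_2}$, we can integrate by parts against any $w \in L^2_{\mathcal H}(\omega_{T_1,T_2})$ and, because the boundary terms vanish (as $V$ and its relevant derivatives vanish near $\partial\omega$) and $\mathcal{H}w=0$, conclude $\int w\bar v = 0$. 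This is exactly $v \perp L^2_{\mathcal H}(\omega_{T_1,T_2})$, completing the sufficiency.

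**Necessity direction (easier):** If $\Omega\setminus\omega$ has a compact component $K$ in $\Omega$, I construct an explicit functional obstructing density. Pick a point $(x_0,t_0)$ inside the "hole" enclosed by $K$ (more precisely, in a bounded component of $\Omega \setminus (\omega \cup K^c\text{-stuff})$ — in the region surrounded by $K$). The fundamental solution of the backward heat equation centered at such a point, or rather a suitable derivative of it integrated in a clever way, gives a solution $g$ of $\mathcal{H}^*g=0$ in $\omega_{T_1,T_2}$ whose pairing annihilates every $u \in S_{\mathcal H}(\overline{\Omega_{T_1,T_2}})$ (by the mean-value / integral-representation property, since $u$ is a genuine solution on a neighborhood enclosing the singularity) but does NOT annihilate all of $L^2_{\mathcal H}(\omega_{T_1,T_2})$ — because $g$ itself, restricted to $\omega$, is a nonzero element of $L^2_{\mathcal H}(\omega_{T_1,T_2})$ against which the pairing is nonzero. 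This exhibits a nontrivial annihilator, so density fails.

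So the overall plan: **(1)** set up Hahn–Banach duality, representing annihilators of $S_{\mathcal H}(\overline{\Omega_{T_1,T_2}})$ by $L^2$ functions $v$; **(2)** build the backward-heat potential $V$ of the zero-extension $v_0$; **(3)** use Green's identity and the orthogonality to show $V$ has vanishing Cauchy data on the outer boundary $\partial\Omega$; **(4)** invoke unique continuation (real-analyticity in $x$) together with the "no compact components" hypothesis to propagate $V=0$ across the whole complement region; **(5)** integrate by parts to conclude $v \perp L^2_{\mathcal H}(\omega_{T_1,T_2})$; **(6)** for necessity, construct the fundamental-solution obstruction from a hole. The main obstacle is step (4): carefully leveraging the topological "no compact components" condition to chain the unique-continuation propagation all the way through a possibly complicated complement, and handling the anisotropic/parabolic nature of the operator (propagation is in the space variable $x$ at fixed time, using analyticity in $x$, rather than in the full $(x,t)$ variables where backward uniqueness is delicate).
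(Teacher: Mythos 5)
Your sufficiency plan coincides in outline with the paper's own argument: represent an annihilator by $v\in L^2(\omega_{T_1,T_2})$, form the potential $V=\Phi*(\chi_{\omega_{T_1,T_2}}v)$ so that ${\mathcal H}'V=\chi_{\omega_{T_1,T_2}}v$, deduce that $V$ vanishes outside $\overline{\Omega_{T_1,T_2}}$ (most simply because for every pole $(y,\tau)$ outside $\overline{\Omega_{T_1,T_2}}$ the translate $\Phi(\cdot-y,\cdot-\tau)$ lies in $S_{\mathcal H}(\overline{\Omega_{T_1,T_2}})$, so orthogonality gives $V(y,\tau)=0$ pointwise --- no Green identity is needed for your step (3)), and propagate the vanishing through $(\Omega\setminus\overline{\omega})\times(T_1,T_2)$ by real-analyticity in $x$ and the no-compact-components hypothesis. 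The genuine gap is in your step (5). The claim that $V=0$ \emph{on a neighborhood of} $\partial\omega$ is false: $V$ vanishes only on ${\mathbb R}^{n+1}\setminus\overline{\omega_{T_1,T_2}}$, so only its Cauchy data on the lateral boundary vanish, while inside $\omega_{T_1,T_2}$, arbitrarily close to $\partial\omega$, one has ${\mathcal H}'V=v\not\equiv 0$. Consequently the concluding ``integration by parts with vanishing boundary terms'' against $w\in L^2_{\mathcal H}(\omega_{T_1,T_2})$ is not available: $w$ is merely an $L^2$ distributional solution, it has no traces on $\partial(\omega_{T_1,T_2})$, and no Green formula applies to the pair $(w,V)$. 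What the argument actually requires is that $V$ be approximable in the $H^{2,1}(\omega_{T_1,T_2})$-norm by functions $v_k\in C^\infty_0(\omega_{T_1,T_2})$, after which $(w,{\mathcal H}'V)_{L^2}=\lim_k (w,{\mathcal H}'v_k)_{L^2}=0$ holds by the very definition of a distributional solution, with no boundary terms ever appearing. Establishing that approximation is the technical core of the theorem (the paper's Lemma 1.2): it needs parabolic regularity of $V$ in $H^{2,1}$ and in Bochner spaces (obtained by viewing $V$ as the solution of a Cauchy problem for the backward heat equation with right-hand side $\chi_{\omega_{T_1,T_2}}v$), trace theorems, the Hedberg--Wolff spectral synthesis theorem, and boundary-layer cutoff estimates of the form $\|\tilde v\|_{L^2(\omega\setminus\overline{\omega^\varepsilon})}\leq C\varepsilon^2\|\tilde v\|_{H^2(\omega\setminus\overline{\omega^\varepsilon})}$ for functions with vanishing Cauchy data. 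Your proposal replaces this entire step by an assertion that does not hold.

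The necessity direction as you state it would also fail. The solid pairing over $\omega_{T_1,T_2}$ of a global solution with a (backward) fundamental solution with pole $(x_0,t_0)$ in the hole does not annihilate $S_{\mathcal H}(\overline{\Omega_{T_1,T_2}})$: already for $u\equiv 1$ the integral $\int_{\omega_{T_1,T_2}}\Phi(x_0-x,t_0-t)\,dx\,dt$ is strictly positive, since $\Phi\geq 0$ and $\Phi(x_0-x,t_0-t)>0$ for $t<t_0$. The mean-value/representation identities you invoke are identities for surface potentials, not for volume $L^2$ pairings, and passing to ``a suitable derivative'' of $\Phi$ does not repair this. The construction that works is of a different nature: the annihilating functional is represented by ${\mathcal H}'\psi$, where $\psi=\phi(x)v_0(x,t)$, $\phi$ is a spatial cutoff equal to $1$ near the compact component and compactly supported in the union of $\omega$ with that component, and --- this is the key idea missing from your plan --- $v_0$ is a nonzero solution of the (backward) heat equation on all of ${\mathbb R}^{n+1}$ vanishing identically for $|t-t_0|\geq\delta$ (Jones's lemma; such time-localized caloric functions exist precisely because solutions are not analytic in $t$). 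Compact support of $\psi$ in time eliminates the boundary terms at $t=T_1,T_2$, so $\int u\,{\mathcal H}'\psi=\int ({\mathcal H}u)\psi=0$ for every $u\in S_{\mathcal H}(\overline{\Omega_{T_1,T_2}})$; the choice $\phi\equiv 1$ near the hole makes ${\mathcal H}'\psi$ vanish there, so the functional genuinely lives on $\omega_{T_1,T_2}$; and $\int \Phi(\cdot-x_0,\cdot-t_0)\,{\mathcal H}'\psi\,dx\,dt=\psi(x_0,t_0)\neq 0$, with $\Phi(\cdot-x_0,\cdot-t_0)\in L^2_{\mathcal H}(\omega_{T_1,T_2})$, shows it is nontrivial on $L^2_{\mathcal H}(\omega_{T_1,T_2})$. (Note also an internal inconsistency in your sketch: an element of $L^2_{\mathcal H}(\omega_{T_1,T_2})$ solves the forward equation, so your $g$ with ${\mathcal H}^{*}g=0$ cannot simultaneously serve as the test element of that space.) Without the Jones-type solution the time boundary terms cannot be removed, and without the cutoff structure the functional does not annihilate global solutions at all.
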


\begin{proof} \textit{Sufficiency.} Clearly, the set 
$S_{\mathcal H}(\overline {\Omega_{T_1,T_2}})$ is everywhere dense in 
 $L^{2} _{\mathcal H}(\omega_{T_1,T_2})$ if and only if the following 
relation
\begin{equation} \label{eq.ort}
(u,w)_{L^2 (\omega_{T_1,T_2})} = 0 \mbox{ for all } w\in 
S_{\mathcal H}(\overline {\Omega_{T_1,T_2}})
\end{equation}
means precisely for a function $u\in L^{2} _{\mathcal H}(\omega_{T_1,T_2})$ that 
$u=0$ in $\omega_{T_1,T_2}$. Of course, the zero function of the space 
$ L^{2} _{\mathcal H}(\omega_{T_1,T_2})$ satisfies \eqref{eq.ort}. 

Assume that the complement 
$\Omega \setminus  \omega$ has no (non-empty connected) compact components in  
$\Omega$. In order to prove the sufficiency of the statement 
we will use the fact that the heat operator ${\mathcal H}$  admits the bilateral 
fundamental solution of the convolution type, see, for instance,  \cite{MikhX,frid}:
\begin{equation*}
\Phi(x,t)=\begin{cases}
\frac{e^{-\frac{|x|^2}{4  \, t}} }{\left(2\sqrt{\pi \, t }\right)^n } & 
\mbox{ if } t>0,\\ 0 & \mbox{ if } t\leqslant   0.
\end{cases} 
\end{equation*} 
By the definition, 
\begin{equation} \label{eq.right}
{\mathcal H}_{x,t} \Phi(x-y,t-\tau) = \delta (x-y, t-\tau), \,\, 
\end{equation}
\begin{equation} \label{eq.left}
{\mathcal H}'_{y,\tau} \Phi(x-y,t-\tau)  =\delta (x-y, t-\tau), 
\end{equation}
where ${\mathcal H}'_{y,\tau}  = -\frac{\partial}{\partial \tau} - 
\sum_{j=1}^n \frac{\partial^2}{\partial y^2_j}$ is the formal adjoint operator  for 
$\mathcal H$ and 
$\delta (x, t)$ is the Dirac functional with the support at the point $(x,t)$.

Let for a function $u\in L^{2} _{\mathcal H}(\omega_{T_1,T_2})$ relation
\eqref{eq.ort} holds true. 

Consider an auxiliary function
\begin{equation}
\label{eq.v}
v (y,\tau) = \int_{\omega_{T_1,T_2}} u (x,t)  \Phi(x-y,t-\tau) dx \, dt.  
\end{equation}
According to \eqref{eq.right}, 
$$
{\mathcal H}_{x,t} \Phi(x-y,t-\tau) =0 \mbox{ if } (x,t) \ne (y,\tau).
$$
Hence 
$$
{\mathcal H}_{x,t} \Phi(x-y,t-\tau) =0 \mbox{ in } \Omega_{T_1,T_2}
$$
for any fixed pair $(y,\tau) \not \in \Omega_{T_1,T_2}$. Then, using 
the hypoellipticity of the operator $\mathcal H$, we conclude that 
$\Phi(x-y,t-\tau) \in S_{\mathcal H}(\overline {\Omega_{T_1,T_2}})$ 
with respect to $(x,t) \in \Omega_{T_1,T_2}$ for any fixed pair $(y,\tau) \not \in 
\Omega_{T_1,T_2}$. In particular, relation 
 \eqref{eq.ort}  implies 
\begin{equation} \label{eq.v1}
v (y,\tau) = 0   \mbox{ in }{\mathbb R}^{n+1} \setminus \overline \Omega_{T_1,T_2}.
\end{equation}
On the other hand, by property \eqref{eq.left} of the fundamental solution, 
\begin{equation} \label{eq.v0}
{\mathcal H}'v    = \chi_{\omega_{T_1,T_2}} u \mbox{ in } {\mathbb R}^{n+1},
\end{equation}
where $\chi_{\omega_{T_1,T_2}}$ is the characteristic function 
of the domain  $\omega_{T_1,T_2}$. Obviously, 
$$
{\mathcal H}'_{y,\tau} v (y,\tau)=0 \mbox{ in  } D \times (t_1,t_2) 
$$
for some domain  $D\subset {\mathbb R}^n$ and numbers  $t_1<t_2$
if and only if 
$$
{\mathcal H}_{y,\tau} v (y,-\tau)=0 \mbox{ in  } D \times (-t_2,-t_1) .
$$
Therefore, according to  \cite[ch. VI, \S 1, theorem 1]{MikhX}), 
the function $v$ is real analytic with respect to the space variables 
of ${\mathbb R}^{n+1} \setminus \overline \omega_{T_1,T_2} $. 

Since the domain $\omega$ has a smooth boundary, each component of 
${\mathbb R}^{n} \setminus \overline \omega $ is itself a non-empty open domain 
with a smooth boundary and the similar fact is true for the domain  $\Omega $. 
However the complements
$\Omega \setminus  \omega$ has no compact components  in $\Omega$ and hence
each component of ${\mathbb R}^{n} \setminus \omega $ 
intersects with ${\mathbb R}^{n} \setminus  \Omega $ by a non-empty open set.
Thus,  \eqref{eq.v1} and the uniqueness theorem for real analytic functions yields 
\begin{equation} \label{eq.v2}
v (y,\tau) = 0   \mbox{ in }{\mathbb R}^{n+1} \setminus \overline \omega_{T_1,T_2}.
\end{equation}
Besides,  \eqref{eq.v0}, \eqref{eq.v2} mean that the function  
$\tilde v (y,\tau) = v (y,-\tau) $ is a solution to the Cauchy problem 
$$
\left\{
\begin{array}{lll}
{\mathcal H} \tilde v = \chi_{\omega_{T_1,T_2}} u \mbox{ in } {\mathbb R}^{n}  \times (-T_2-1,1-T_1),\\
\tilde v(y,-T_2-1) = 0 \mbox{ on } {\mathbb R}^{n} .\\
\end{array}
\right.
$$
Then, according to \cite[ch. 2, \S 5, theorem 3]{Kry08}, 
$\tilde v\in H^{2,1} ({\mathbb R}^{n}  \times (-T_2-1,1-T_1))$, and the solution 
in this class is unique. Moreover, the regularity of this unique solution to the Cauchy 
problem may be expresses in terms of the Bochner spaces, too. Namely, 
 $\tilde v\in C ( [-T_2-1,1-T_1], H^{1}({\mathbb R}^{n}) )
\cap L^2 ([-T_2-1,1-T_1], H^{2} ({\mathbb R}^{n}) )
$, see, for instance, \cite[ch. 3, \S 1]{Tema79}, where  similar linear problems 
for Stokes equations are considered. In particular, 
the function $v$ belongs to the space 
\begin{equation} \label{eq.space.1}
 C ( [T_1-1,T_2+1], H^{1}({\mathbb R}^{n}) )
\cap L^2 ([T_1-1, T_2+1], H^{2} ({\mathbb R}^{n}) )
\cap H^{2,1} ({\mathbb R}^{n}  \times (T_1-1,T_2+1))  .
\end{equation}

\begin{lem} \label{l.dense.c0} Any function of the type  \eqref{eq.v}, satisfying 
\eqref{eq.v2}, may be approximated by elements of $C^\infty_0  (\omega_{T_2,T_2})$
in the topology of the Hilbert space $H^{2,1} (\omega_{T_1,T_2})$. 
\end{lem}

\begin{proof}
First of all, we note that such a function 
may be approximated by functions of the class 
$C^\infty_0  ({\mathbb R}^{n+1})$ in the topology of the space 
$H^{2,1} ({\mathbb R}^{n+1})$. This fact can be extracted from 
\cite[ch. 3, \S 7, property 6]{MikhX}, but it can be proved directly, too. 

Indeed, denote by $h_\delta (x)$ the standard compactly supported function with 
the support in the  ball  $B(x,\delta) \subset {\mathbb R}^n$ with the centre  
at the point  $x$ and of the radius   $\delta>0$:
$$
h_\delta (x) = \left\{
\begin{array}{lll}
0, & \mathrm{if} & |x|\geq \delta, \\
c(\delta) \exp{(1/(|x|^2-\delta^2))}, & \mathrm{if} & |x|<\delta, \\
\end{array}
\right.
$$
where $c(\delta) $ is  the constant providing equality 
$$
\int_{{\mathbb R}^n} h(x) \, dx =1.
$$
Then, as it is well known, the standard regularisation  
$$
(R _\delta v) (x,t) = \int_{{\mathbb R}^{n+1}} h_\delta (x-y,t-\tau) v (y,\tau) \, dy d\tau 
$$
belongs to the space  $C^\infty_0  ({\mathbb R}^{n+1})$ for any positive number 
$\delta$ and 
$$
\lim_{\delta \to + 0}\| v -R _\delta v  \|_{L^2 ({\mathbb R}^{n+1})} =0,
$$
see, for instance, \cite{MikhX}. Since the standard regularization is defined 
with the use of the convolution, and the function  $v$ belongs to 
$H^{2,1} ({\mathbb R}^{n+1})$ and supported in 
$\overline{\omega_{T_1,T_2}}$, then 
$$
\partial_x^\alpha \partial_t^j (R _\delta v) (x,t) = 
 (R _\delta \partial_y^\alpha \partial_\tau t^j v) (x,t),
$$
if  $|\alpha|+2j\leq 2$. Therefore 
$$
\lim_{\delta \to + 0}\| 
\partial_x^\alpha \partial_t^j v  - 
\partial_x^\alpha \partial_t^j (R _\delta v) 
\|_{L^2 ({\mathbb R}^{n+1})} =0 \mbox{ if } |\alpha|+2j\leq 2,
$$
and then 
$$
\lim_{\delta \to + 0}\| v- R _\delta v  
\|_{H^{2,1} ({\mathbb R}^{n+1})} =0 .
$$
Next, we may continue the proof with the use standard scheme, see, for example, 
Indeed, denote by  $\partial_ \nu=\sum\limits_{j=1}^n\nu_j \partial_{x_j}$ the normal 
derivative, where  $\nu (x)=(\nu_1 (x), ..., \nu_n (x))$ is the unit external normal vector 
to the surface $\partial\Omega$ at the point  $x$. If 
 $\partial\omega$ is a surface of class $C^2$, then, as the function 
$v$ belongs to space \eqref{eq.space.1}, we see that there are the traces 
$$
v_{|\partial (\omega_{T_1,T_2})} \in H^{1/2} (\partial (\omega_{T_1,T_2})), 
$$
$$
 v_{|(\partial \omega)_{T_1,T_2}} \in L^2 ([T_1, T_2], H^{3/2} (\partial \omega))
, \, \, 
\partial_\nu  v_{|(\partial \omega)_{T_1,T_2}} 
\in  L^2 ([T_1, T_2], H^{1/2} (\partial \omega)),
$$
cf. \cite[ch. 3, \S 7, property 7]{MikhX}. 

Besides, according to \eqref{eq.v2}, 
$$
v = 0  \mbox{ on } \partial (\omega_{T_1,T_2}), 
\, \, 
\partial_ \nu v = 0  \mbox{ on } (\partial \omega)_{T_1,T_2}.
$$
Hence, the spectral synthesis theorem, see \cite{HedbWolf1}, implies that 
$v$ belongs to both the space  \eqref{eq.space.1} and the space  
\begin{equation} \label{eq.space.2}
 C ( [T_1,T_2], H^{1}_0(\omega) )
\cap L^2 ([T_1, T_2], H^{2}_0 (\omega) ) \cap 
H^{1}_0 (\omega_{T_1,T_2}) 
\cap H^{2,1} (\omega_{T_1,T_2})  ,
\end{equation}
where $ H^{s}_0(\omega) $ is the closure in $ H^{s}(\omega) $ of the space  
$C^\infty_0 (\omega)$ of infinitely differentiable functions with compact supports in 
$\omega$. 

On the other hand, if $\partial\omega \in C^2 $, then there is a real valued function 
$\rho $, two times continuously differentiable in a neghbourhood 
 $U$ of the surface $\partial\omega$ and such that 
$$
\omega = \{ x \in {\mathbb R}^n: \, \rho (x)<0\}, \,\, \nabla \rho \ne 0 \mbox{ in } U 
\} .
$$
Hence, for all sufficiently small numbers $\varepsilon>0$ the sets 
$$
\omega^{\varepsilon} = 
\{ x \in {\mathbb R}^n: \, \rho (x)<-\varepsilon\}
$$
are domains with boundaries of class $C^2$ and 
$$\omega ^{\varepsilon}\Subset \omega ^{\varepsilon'}\Subset 
\omega, 
$$
if $0<\varepsilon' <\varepsilon$; moreover for the Lebesgue 
measure of the domain  $\omega \setminus 
\overline{\omega ^{\varepsilon} }$ we have  
$$
\lim_{\varepsilon \to +0}\mbox{mes} (\omega \setminus 
\overline{\omega ^{\varepsilon} }) = 0. 
$$
According to \cite[ch. 3, \S 5, lemma 1]{MikhX}, if  $\partial \omega \in C^1$ then 
there is a constant  $C _0 (\partial \omega)$, depending on the square of the surface 
$\partial \omega$, only, and such that  
$$
 \|\tilde v \|_{L^2 (\omega \setminus \overline {\omega^\varepsilon})} 
\leq C_0 (\partial \omega)\, \varepsilon  
\|\tilde v \|_{H^1 (\omega \setminus \overline {\omega^\varepsilon})} 
$$
for any function $\tilde v \in H^1 (\omega)$ with zero trace $\tilde v_{|\partial \omega}$
on $\partial \omega$. 

Similarly, if  $\partial \omega \in C^2$, then there are constants  
 $C _1 (\partial \omega)$, $C _2 (\partial \omega)$, depending on the square 
of the surface  $\partial \omega$, and such that 
\begin{equation} \label{eq.est.1}
 \|\tilde v \|_{L^2 (\omega \setminus \overline {\omega^\varepsilon})} 
= C_1 (\partial \omega)\, \varepsilon^2 
\|\tilde v \|_{H^2 (\omega \setminus \overline {\omega^\varepsilon})} ,
\end{equation}
\begin{equation} \label{eq.est.2}
 \|\nabla \tilde v \|_{L^2 (\omega \setminus \overline {\omega^\varepsilon})} 
= C_2 (\partial \omega)\, \varepsilon  
\|\tilde v \|_{H^2 (\omega \setminus \overline {\omega^\varepsilon})} 
\end{equation}
for any function $\tilde v \in H^2 (\omega)$ with zero traces 
$\tilde v_{|\partial \omega}$ and $\partial_\nu \tilde v_{|\partial \omega}$
on $\partial \omega$. 

Set 
$$
R^{(1)}_\varepsilon (x) = \int_{\omega_{\varepsilon/2}}  h_{\varepsilon/3} (x-y) \, dy
$$
$$
R^{(2)}_\varepsilon (t) = \int_{T_1+\varepsilon/2}^{T_2-\varepsilon/2} 
 h_{\varepsilon/3} (t-\tau) \, dy.
$$
It is known, see, for instance, \cite[ch. 3, \S 5]{MikhX}, that
\begin{equation} \label{eq.Bochner.1}
0\leq R^{(1)}_\varepsilon  \leq 1, \,\,   
0 \leq R^{(2)}_\varepsilon  \leq 1,  
\end{equation}
\begin{equation} \label{eq.Bochner.2}
|\partial^\alpha R^{(1)}_\varepsilon | \leq c_{\alpha} \, \varepsilon ^{-|\alpha|}, 
\,\, 
|\partial^j  R^{(2)}_\varepsilon | \leq c_{j} \, \varepsilon ^{-j}, 
\end{equation}
for all $x \in {\mathbb R}^n$, $t \in \mathbb R$, 
$\alpha \in {\mathbb Z}^n_+$, $j \in {\mathbb Z}_+$, 
with some positive constants $c_\alpha, c_j$, independent on  $x$ and $t$.

Fix a sequence $\{ v_k\} \subset C^\infty _0({\mathbb R}^{n+1})$, converging to 
$v$ in the space $H^{2,1} ({\mathbb R}^{n+1} )$. Then the functional sequence 
$$
 \{  v_{k,\varepsilon} (x,t)=  R^{(2)}_\varepsilon (t) R^{(2)}_\varepsilon (x)v_k (x,t)\}
$$
lies to $C^\infty _0 (\omega _{T_1, T_2})$. 

By the triangle inequality, 
\begin{equation} \label{eq.vk.1}
\|v - v_{k,\varepsilon} \|_{H^{2,1} (\omega _{T_1, T_2})} \leq 
\|v - v_{k} \|_{H^{2,1} (\omega _{T_1, T_2})} + 
\|v _k - v_{k,\varepsilon} \|_{H^{2,1} (\omega _{T_1, T_2})} .
\end{equation}
As 
\begin{equation} \label{eq.lim.1}
\lim_{k\to + \infty}\|v - v_{k} \|_{H^{2,1} (\omega _{T_1, T_2})} =0,
\end{equation}
then we need to estimate the second summand in the right hand side of formula 
 \eqref{eq.vk.1}, only. However,  
$$
v _k (x,t)- v_{k,\varepsilon} (x,t) = 
(1-R^{(1)}_\varepsilon (t) R^{(2)}_\varepsilon (x)) v_k (x,t)
$$
and, in particular,  
\begin{equation} \label{eq.eps.0}
v _k (x,t)- v_{k,\varepsilon} (x,t) = 0 \mbox{ for all } (x,t) \in 
\omega^{\varepsilon} \times (T_1+\varepsilon, T_2-\varepsilon)
\end{equation}
Hence, 
\begin{equation} \label{eq.eps}
2^{-1}\|v _k - v_{k,\varepsilon}\|^2_{H^{2,1} (\omega_{T_1,T_2})} \leq  
\sum_{|\alpha|+2j \leq 2}
\|(1-R^{(1)}_\varepsilon  R^{(2)}_\varepsilon) \partial^\alpha _x \partial^j_t v_k \|^2
_{L^{2} (\omega_{T_1,T_2})} + 
\end{equation}
$$
\|R^{(1)}_\varepsilon  \Big(\frac{d R^{(2)}_\varepsilon}{dt} \Big) v_k \|^2
_{L^{2} (\omega _{T_1,T_1+\varepsilon} \cup \omega _{T_2-\varepsilon,T_2})} +
\sum_{1\leq |\alpha| \leq 2}
\|(\partial^{\alpha}R^{(1)}_\varepsilon)  R^{(2)}_\varepsilon  v_k \|^2
_{L^{2} ((\omega \setminus \omega^{\varepsilon})_{T_1,T_2})}  +
$$
$$
\sum_{|\beta| =1} \sum_{|\gamma|=1}
\|(\partial^{\beta}R^{(1)}_\varepsilon)  R^{(2)}_\varepsilon  \partial^{\gamma} v_k \|^2
_{L^{2} ((\omega \setminus \omega^{\varepsilon})_{T_1,T_2})}  .
$$
Since $v$ belongs to the space  \eqref{eq.space.2}, then 
\eqref{eq.est.1}, \eqref{eq.Bochner.1}, \eqref{eq.Bochner.2} 
and the Fubini theorem imply that  
\begin{equation} \label{eq.eps.1}
\sum_{1\leq |\alpha| \leq 2}
\|(\partial^{\alpha}R^{(1)}_\varepsilon)  R^{(2)}_\varepsilon  v \|^2
_{L^{2} ((\omega \setminus \omega^{\varepsilon})_{T_1,T_2})} \leq
\tilde C \sum_{1\leq |\alpha| \leq 2} \varepsilon^{2-|\alpha|} \int_{T_1}^{T_2} 
\|v \|^2_{H^{2} (\omega \setminus \omega^{\varepsilon})}\, dt\leq 
\end{equation}
$$
C \|  v \|^2
_{H^{2,1} ((\omega \setminus \omega^{\varepsilon})_{T_1,T_2})} 
\leq C \| v\|_{H^{2,1} (\omega_{T_1,T_2} \setminus 
\overline{\omega^{\varepsilon} _{T_1+\varepsilon, T_2-\varepsilon}})}
$$
and, similarly, 
\begin{equation} \label{eq.eps.2}
\sum_{|\beta| =1} \sum_{|\gamma|=1}
\|(\partial^{\beta}R^{(1)}_\varepsilon)  R^{(2)}_\varepsilon  \partial^{\gamma} v \|^2
_{L^{2} ((\omega \setminus \omega^{\varepsilon})_{T_1,T_2})}  \leq 
\end{equation}
$$
\tilde C \sum_{ |\beta |=1} \varepsilon^{1-|\beta|} \int_{T_1}^{T_2} 
\|v \|^2_{H^{2} (\omega \setminus \omega^{\varepsilon})} \, dt\leq 
C \|  v \|^2
_{H^{2,1} ((\omega \setminus \omega^{\varepsilon})_{T_1,T_2})} 
\leq C \| v\|_{H^{2,1} (\omega_{T_1,T_2} \setminus 
\overline{\omega^{\varepsilon} _{T_1+\varepsilon, T_2-\varepsilon}})}
$$
with constants $C,\tilde C$, independent on $v$ and $\varepsilon$. 

The boundaries of the  cylinder domains
$\omega _{T_1,T_1+\varepsilon}$ and $\omega _{T_2-\varepsilon,T_2}$ are not smooth, 
but combining results \cite[ch. 3, \S 5]{MikhX} 
related to a function $v$, having the trace vanishing on surfaces
 $\omega\times T_1$ and $\omega\times T_2$, 
with bounds  \eqref{eq.Bochner.1}, \eqref{eq.Bochner.2}, 
we see that 
\begin{equation} \label{eq.eps.3}
\|R^{(1)}_\varepsilon  \Big(\frac{d R^{(2)}_\varepsilon}{dt} \Big) v \|^2
_{L^{2} (\omega _{T_1,T_1+\varepsilon} \cup \omega _{T_2-\varepsilon,T_2})} 
\leq \varepsilon^{-1} \varepsilon 
C \, \| v\|^2
_{H^{1} (\omega _{T_1,T_1+\varepsilon} \cup \omega _{T_2-\varepsilon,T_2})} \leq 
 \end{equation}
$$
C \, \| v \|^2
_{H^{2,1} (\omega _{T_1,T_1+\varepsilon} \cup \omega _{T_2-\varepsilon,T_2})} 
\leq C \| v\|_{H^{2,1} (\omega_{T_1,T_2} \setminus 
\overline{\omega^{\varepsilon} _{T_1+\varepsilon, T_2-\varepsilon}})}
 $$
with a constant $C$, independent on $v$ and  $\varepsilon$.

Besides, according to  \eqref{eq.Bochner.1}, \eqref{eq.Bochner.2}, 
\begin{equation} \label{eq.eps.4}
\sum_{|\alpha|+2j \leq 2}
\|(1-R^{(1)}_\varepsilon  R^{(2)}_\varepsilon) \partial^\alpha _x \partial^j_t v \|^2
_{L^{2} (\omega_{T_1,T_2})}
\leq C \| v\|_{H^{2,1} (\omega_{T_1,T_2} \setminus 
\overline{\omega^{\varepsilon} _{T_1+\varepsilon, T_2-\varepsilon}})}
\end{equation}
with a constant $C$, independent on $v$ and  $\varepsilon$.

Using the continuity of the Lebesgue integral with respect 
to the measure of the integration set, we conclude that 
\begin{equation} \label{eq.lim.2}
\lim_{\varepsilon \to + 0}
\| v\|_{H^{2,1} (\omega_{T_1,T_2} \setminus 
\overline{\omega^{\varepsilon} _{T_1+\varepsilon, T_2-\varepsilon}})} =0.
\end{equation}

Fix a number $E>0$. Relation \eqref{eq.lim.1} means that there is a number  
 $N(E,\varepsilon) \in \mathbb N$ such that for all 
 $k\geq N(E,\varepsilon)$ we have 

$$
\|v - v_{k} \|_{H^{2,1} (\omega _{T_1, T_2})} < E \varepsilon ^2.
$$
In this case,  \eqref{eq.eps.4} implies that for such $k$ we have 
\begin{equation} \label{eq.eps.5}
2^{-1}\sum_{|\alpha|+2j \leq 2}
\|(1-R^{(1)}_\varepsilon  R^{(2)}_\varepsilon) \partial^\alpha _x \partial^j_t v_k \|^2
_{L^{2} (\omega_{T_1,T_2})} \leq 
\end{equation}
$$
 \sum_{|\alpha|+2j \leq 2}\Big(
\|(1-R^{(1)}_\varepsilon  R^{(2)}_\varepsilon) \partial^\alpha _x \partial^j_t (v_k-v) \|^2
_{L^{2} (\omega_{T_1,T_2})} + 
\|(1-R^{(1)}_\varepsilon  R^{(2)}_\varepsilon) \partial^\alpha _x \partial^j_t v \|^2
_{L^{2} (\omega_{T_1,T_2})}\Big)\leq
$$
$$
C (E\varepsilon^2 +
 \| v\|_{H^{2,1} (\omega_{T_1,T_2} \setminus 
\overline{\omega^{\varepsilon} _{T_1+\varepsilon, T_2-\varepsilon}})})
$$
with a constant $C$, independent on $v$ and  $\varepsilon$.

Besides,  \eqref{eq.eps.1} and \eqref{eq.eps.2} yield 
\begin{equation} \label{eq.eps.6}
2^{-1} \sum_{1\leq |\alpha| \leq 2}
\|(\partial^{\alpha}R^{(1)}_\varepsilon)  R^{(2)}_\varepsilon  v_k \|^2
_{L^{2} ((\omega \setminus \omega^{\varepsilon})_{T_1,T_2})}\leq 
\end{equation}
$$
\sum_{1\leq |\alpha| \leq 2}\Big(
\|(\partial^{\alpha}R^{(1)}_\varepsilon)  R^{(2)}_\varepsilon  (v_k-v) \|^2
_{L^{2} ((\omega \setminus \omega^{\varepsilon})_{T_1,T_2})} + 
\|(\partial^{\alpha}R^{(1)}_\varepsilon)  R^{(2)}_\varepsilon  v \|^2
_{L^{2} ((\omega \setminus \omega^{\varepsilon})_{T_1,T_2})}
\Big)\leq 
$$
$$
C (E+
 \| v\|_{H^{2,1} (\omega_{T_1,T_2} \setminus 
\overline{\omega^{\varepsilon} _{T_1+\varepsilon, T_2-\varepsilon}})}),
$$
and 
\begin{equation} \label{eq.eps.7}
2^{-1}\sum_{|\beta| =1} \sum_{|\gamma|=1}
\|(\partial^{\beta}R^{(1)}_\varepsilon)  R^{(2)}_\varepsilon  (\partial^{\gamma} v_k \|^2
_{L^{2} ((\omega \setminus \omega^{\varepsilon})_{T_1,T_2})}  \leq 
\end{equation}
$$
\sum_{|\beta| =1\atop |\gamma|=1} \Big(
\|(\partial^{\beta}R^{(1)}_\varepsilon)  R^{(2)}_\varepsilon  \partial^{\gamma} (v_k-v) \|^2
_{L^{2} ((\omega \setminus \omega^{\varepsilon})_{T_1,T_2})} + 
\|(\partial^{\beta}R^{(1)}_\varepsilon)  R^{(2)}_\varepsilon  (\partial^{\gamma} v \|^2
_{L^{2} ((\omega \setminus \omega^{\varepsilon})_{T_1,T_2})}\Big) \leq 
$$
$$
C (E\varepsilon +
 \| v\|_{H^{2,1} (\omega_{T_1,T_2} \setminus 
\overline{\omega^{\varepsilon} _{T_1+\varepsilon, T_2-\varepsilon}})})
$$
with a constant $C$, independent on $v$ and  $\varepsilon$.

Similarly, using \eqref{eq.eps.3}, we obtain 
\begin{equation} \label{eq.eps.8}
\|R^{(1)}_\varepsilon  \Big(\frac{d R^{(2)}_\varepsilon}{dt} \Big) v_k \|^2
_{L^{2} (\omega _{T_1,T_1+\varepsilon} \cup \omega _{T_2-\varepsilon,T_2})}
\leq 
\end{equation}
$$
\|R^{(1)}_\varepsilon  \Big(\frac{d R^{(2)}_\varepsilon}{dt} \Big) (v_k -v)\|^2
_{L^{2} (\omega _{T_1,T_1+\varepsilon} \cup \omega _{T_2-\varepsilon,T_2})} + 
\|R^{(1)}_\varepsilon  \Big(\frac{d R^{(2)}_\varepsilon}{dt} \Big) v\|^2
_{L^{2} (\omega _{T_1,T_1+\varepsilon} \cup \omega _{T_2-\varepsilon,T_2})}
\leq 
$$ 
$$
C (E\varepsilon +
 \| v\|_{H^{2,1} (\omega_{T_1,T_2} \setminus 
\overline{\omega^{\varepsilon} _{T_1+\varepsilon, T_2-\varepsilon}})})
$$
with a constant $C$, independent on $v$ and  $\varepsilon$.

Finally, combining estimates \eqref{eq.eps}, 
\eqref{eq.eps.5}--\eqref{eq.eps.8} and taking in account 
\eqref{eq.lim.2}, we conclude that the statement of the lemma is fulfilled.
\end{proof}

Now, using lemma \ref{l.dense.c0} and fixing a sequence 
 $\{ v_k \} \subset C^\infty _0 
(\omega _{T_1, T_2})$, converging to the function $v$ in $H^{2,1} (\omega _{T_1, T_2})$  
we see that 
$$
\|u\|^2_{L^2 (\omega _{T_1, T_2})} = 
(u , {\mathcal H}' v )_{L^2 (\omega _{T_1, T_2})} = 
\lim_{k \to + \infty }(u, {\mathcal H}' v_ k )_{L^2 (\omega _{T_1, T_2})}   
= 0,
$$
because ${\mathcal H} u =0 $ in $\omega _{T_1, T_2}$ in the sense of distributions. 

Thus, $u \equiv 0$ in $\omega _{T_1, T_2}$, that was to be proved.

\textit{Necessity.} This part of the proof inspired by the arguments 
from the  classical approximation theorem for spaces of solutions to the heat equation in 
domains from  ${\mathbb R}^{n+1}$ with the topology of the uniform convergence on 
subcompacts, see \cite{J}. More precisely, let the complement 
$\Omega \setminus  \omega$ has at least one compact component. 
As we noted before, since the domains  $\Omega, \omega$ has smooth boundaries, 
this component is the closure of a non-empty domain 
 $\omega^{(0)}$. Moreover, the set %$\omega^{(1)} = 
$\omega \cup \overline{\omega^{(0)}}$ is a domain with smooth boundary in ${\mathbb R}^n$. 

Fix a point  $(x_0,t_0)\in \omega^{(0)} \times (T_1,T_2)$. According to 
\cite[lemma from \S 1]{J}, for any $\delta>0$ there is a function 
 $v_0 \in S _{\mathcal H}({\mathbb R}^{n+1})$ such that 
$v(x_0, t_0) \ne 0$ and $v_0(x,t) =0$ for all $t$, $|t-t_0|\geq \delta$. 
 
Next, there is an infinitely times differentiable function 
 $\phi$ supported in $\omega $ such that $\phi (x_0) \equiv 1$ in 
a neighbourhood  $U $ of the compact 
 $\overline \omega^{(0)} $.  Then the function $v_1 (x,t)= \phi(x) v_0(x,t)$
is infinitely smooth in ${\mathbb R}^{n+1}$, supported in 
$\omega \times [t_0-\delta, t_0+\delta]$ and, moreover, 
$$
  {\mathcal H'} v_1 = 2\nabla \phi \cdot \nabla v_0 + v_0 \Delta \phi 
	\mbox{ in }   {\mathbb R}^{n+1}.
$$
In particular, since $ \nabla \phi =0$ in $U$, then 
$$
  {\mathcal H'} v_1 =0 \mbox{ in } U \times (T_1,T_2).
$$
Denote by $\Pi_0$ the orthogonal projection from $L^2 (\omega_{T_1,T_2})$ 
onto $L^2_{\mathcal H} (\omega_{T_1,T_2})$. 

The properties of the projection $\Pi_0$, the function $v_1$ and 
the fundamental solution  $\Phi$ imply that  for all  
$ (y,\tau) \not \in \overline{ \omega\times (T_1,T_2)}$ we have 
$$
\int_{T_1}^{T_2} \int_{\omega} 
(\Pi_0 {\mathcal H'} v_1)  (x,y) \Phi (x-y, t-\tau) dx \, dt =
$$
$$
\int_{T_1}^{T_2} \int_{\omega} 
({\mathcal H'} v_1)  (x,y) \Phi (x-y, t-\tau) dx \, dt =
$$
\begin{equation} \label{eq.non.ort}
\int_{T_1}^{T_2} \int_{\omega\cup \omega^{(0)}} 
({\mathcal H'} v_1)  (x,y) \Phi (x-y, t-\tau) dx \, dt =v_1 (y,\tau).
\end{equation}
As a corollary, the function  $\Pi_0 {\mathcal H'} v_1 \in 
L^2 _{\mathcal H}(\omega_{T_1,T_2})$ is not 
$L^2 (\omega_{T_1,T_2})$-orthogonal to the function 
$\Phi (x-x_0, t- t_0) \in L^2 _{\mathcal H}(\omega_{T_1,T_2})$, but it is 
$L^2 (\omega_{T_1,T_2})$-orthogonal to the functions 
 $\Phi (x- y, t- \tau) \in L^2 _{\mathcal H}(\omega_{T_1,T_2})$ 
with any vectors $(y,\tau) \not \in \overline {(\omega \cup \overline{\omega^{(0)}}) 
\times (T_1,T_2)}$.

To finish the proof we need the integral Green formula for the heat equation. 
With this purpose, for functions $f \in  L^2(\Omega_{T_1,T_2})$, $v \in L^2 ([T_1,T_2], 
H^{1/2}(\partial \Omega))$, $w \in L^2 ([T_1,T_2], 
H^{3/2}(\partial \Omega))$, $h \in 
H^{1/2}(\Omega)$ we consider the following parabolic potentials:  
\begin{equation*} 
I_{\Omega, T_1} (h) (x,t)= \int\limits_{\Omega}\Phi(x-y, t)h(y,T_1) dy,  
$$
$$
G_{\Omega,T_1} (f) (x,t)=\int\limits_{T_1}^t\int\limits_\Omega \Phi(x-y,\ t-\tau)f(y, \tau)
dy d\tau, 
\end{equation*}
\begin{equation*} 
V_{\partial \Omega,T_1} (v) (x,t)=\int\limits_{T_1}^t\int\limits_{\partial \Omega} \Phi(x-y, t-\tau) v(y, \tau)
ds(y)d\tau, 
\end{equation*}
\begin{equation*} 
W_{\partial \Omega,T_1} (w) (x,t)= - \int\limits_{T_1}^t\int\limits_{\partial \Omega} \partial_{\nu_y} \Phi(x-y, 
t-\tau)w(y, \tau)ds(y) d\tau
\end{equation*}
(see, for instance, %\cite[гл. 4, \S 1]{LadSoUr67}, 
\cite[ch. 1, \S 3 and ch. 5, \S 2]{frid}. 
%\cite[гл. 3, \S 10]{landis}). 
By the definition, these are  (improper) integrals, depending 
on the parameter $(x,t)$.

\begin{lem} \label{l.Green}
For any $ T_1 < T_2$ and any  $u \in H^{2,1} (\Omega_{T_1,T_2}) $, the following 
formula holds true:
\begin{equation*} %\label{eq.Green}
\left.
\begin{aligned}
u(x, t) \mbox{ in } \Omega_{T_1,T_2}  \\
0 \mbox{ outside } \overline{\Omega_{T_1,T_2}} 
\end{aligned}
\right\} \! = 
I_{\Omega,T_1} (u)   + G_{\Omega,T_1} ({\mathcal H}u)  + 
V _{\partial \Omega, T_1} \left( 
\partial _\nu u \right)  +  W_{\partial \Omega, T_1} (u) .
\end{equation*}
\end{lem}

\begin{proof}  See, \cite[ch. 6, \S 12]{svesh} 
(and \cite[theorem 2.4.8]{Tark37} for more general differential  operators, 
having fundamental solutions or parametrices).
\end{proof}

If a function $u $ belongs to  $S _{\mathcal H}(\overline{ \Omega_{T_1,T_2}}) $, 
then it belongs to  $H^{2,1} _{\mathcal H}(\Omega'_{T'_1,T'_2}) $ 
for some numbers  $T_1'<T_1<T_2<T_2'$ and a bounded domain 
$\Omega'\Supset \Omega$. Then Green formula yields
\begin{equation*} %\label{eq.Green.H}
\left.
\begin{aligned}
u(x, t) \mbox{ in } \Omega'_{T'_1,T'_2}  
\\
0 \mbox{ outside } \overline{\Omega'_{T'_1,T'_2}} 
\end{aligned}
\right\} \! = 
I_{\Omega',T'_1} (u)    + 
V _{\partial \Omega, T'_1} \left( 
\partial _\nu u \right)  +  W_{\partial \Omega', T'_1} (u). 
\end{equation*} 

In particular,  Fubini theorem and formulas  \eqref{eq.non.ort} for 
$(y,\tau) \in (\partial \Omega' \times [T_1',T'_2]) \cap (\Omega' \times \{0\})$ give us 
possibility to conclude that the non-zero function 
$\Pi_0 {\mathcal H'} v_1 \in L^2 _{\mathcal H}(\omega_{T_1,T_2})$ is 
$L^2 (\omega_{T_1,T_2})$-orthogonal to all the functions from  
$S _{\mathcal H}(\overline{ \Omega_{T_1,T_2}})$. This proves that 
 $S _{\mathcal H}(\overline {\Omega_{T_1,T_2}})$ is not everywhere dense set in the space 
$L^2 _{\mathcal H}(\omega_{T_1,T_2})$ if there is a compact components of the set 
$\Omega \setminus \omega$ in $\Omega$.
\end{proof}

\begin{cor} \label{c.dense.base0}
Let $s,k\in {\mathbb Z}_+$ be arbitrary numbers, 
$\omega \subset \Omega \Subset {\mathbb R}^n$, $\partial \omega, \partial \Omega 
\in C^2$ and let the complement
$\Omega \setminus \omega$ has no compact components in 
$\Omega$. Then the spaces 
$C^\infty_{\mathcal H}(\overline {\Omega_{T_1,T_2}})$
and $H^{k,2s,s} _{\mathcal H}(\Omega_{T_1,T_2})$ everywhere dense 
in  $L^{2} _{\mathcal H}(\omega_{T_1,T_2})$. 
\end{cor}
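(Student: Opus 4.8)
The plan is to derive the corollary directly from Theorem \ref{t.dense.base} together with the embedding chain \eqref{eq.emb.H}, so that essentially no new analysis is required. First I would observe that restriction to the smaller cylinder $\omega_{T_1,T_2}$ carries each of the three spaces $S_{\mathcal H}(\overline{\Omega_{T_1,T_2}})$, $C^\infty_{\mathcal H}(\overline{\Omega_{T_1,T_2}})$ and $H^{k,2s,s}_{\mathcal H}(\Omega_{T_1,T_2})$ into $L^2_{\mathcal H}(\omega_{T_1,T_2})$. Indeed, any $u$ in the largest of these spaces lies in $H^{k,2s,s}(\Omega_{T_1,T_2}) \subset L^2(\Omega_{T_1,T_2})$, so its restriction to $\omega_{T_1,T_2} \subset \Omega_{T_1,T_2}$ belongs to $L^2(\omega_{T_1,T_2})$ and continues to satisfy the heat equation there, hence lies in $L^2_{\mathcal H}(\omega_{T_1,T_2})$.

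Next I would invoke the continuous inclusions \eqref{eq.emb.H},
$$S_{\mathcal H}(\overline{\Omega_{T_1,T_2}}) \subset C^\infty_{\mathcal H}(\overline{\Omega_{T_1,T_2}}) \subset H^{k,2s,s}_{\mathcal H}(\Omega_{T_1,T_2}),$$
which, after the restriction just described, yield the corresponding inclusions of subsets of $L^2_{\mathcal H}(\omega_{T_1,T_2})$. Since the hypotheses of the corollary coincide exactly with those of Theorem \ref{t.dense.base} — namely $\partial\omega,\partial\Omega \in C^2$ and the absence of compact components of $\Omega\setminus\omega$ in $\Omega$ — that theorem already guarantees that the smallest of the three families, $S_{\mathcal H}(\overline{\Omega_{T_1,T_2}})$, is everywhere dense in $L^2_{\mathcal H}(\omega_{T_1,T_2})$.

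Finally I would apply the elementary monotonicity of density under set inclusion: if $A \subset B$ are subsets of a normed space $V$ and $A$ is dense in $V$, then so is $B$, since $\overline{A} \subset \overline{B} \subset V$ forces $\overline{B} = V$. Taking $A$ to be (the restriction of) $S_{\mathcal H}(\overline{\Omega_{T_1,T_2}})$ and letting $B$ be in turn $C^\infty_{\mathcal H}(\overline{\Omega_{T_1,T_2}})$ and $H^{k,2s,s}_{\mathcal H}(\Omega_{T_1,T_2})$ gives the asserted density of both larger spaces. I expect no genuine obstacle at this stage: the entire analytic content — the potential-theoretic construction \eqref{eq.v}, the uniqueness theorem for real-analytic functions, the approximation Lemma \ref{l.dense.c0}, and the Green-formula argument underlying the necessity part — has already been discharged in the proof of Theorem \ref{t.dense.base}, and the corollary follows as a purely formal consequence of enlarging the approximating family.
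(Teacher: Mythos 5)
Your proposal is correct and follows exactly the paper's own argument: the paper likewise deduces the corollary immediately from Theorem \ref{t.dense.base} together with the embeddings \eqref{eq.emb.H}, and your added details (restriction into $L^{2}_{\mathcal H}(\omega_{T_1,T_2})$ and monotonicity of density under inclusion) are just a careful spelling-out of that one-line proof.
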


\begin{proof} Follows immediately from theorem \ref{t.dense.base}, 
because of embeddings \eqref{eq.emb.H}.
\end{proof}

As we noted in the introduction, assumptions of 
theorem \ref{t.dense.base} are quite similar to that 
of the Runge type theorems related to the uniform approximation on compact subsets 
for solutions to the heat equation in a lesser domain by the solutions in a bigger one, 
see \cite{J}, \cite{D80} 
(and a refinement  \cite{GauTa10} related with a constructive way of approximation 
sequences with the use of the fundamental solution to the  heat equation). 
 It is appropriate to note, that instead of the cylinder domains of the type 
 $\omega_{T_1,T_2}$ one may consider more general domains with additional assumptions 
on the boundaries' smoothness. 

\section{Theorem on the basis with the double orthogonality property}
\label{s.ex}
\setcounter{equation}{0}

We continue with the theorem on the basis with the double orthogonality property
in spaces of solutions to the heat equation.

\begin{thm} \label{t.bdo} 
Let $s\in \mathbb N$, $k \in {\mathbb Z}_+$, and let 
$\omega$ be a subdomain in  $\Omega \Subset {\mathbb R}^n$ with $C^2$-boundary and such that 
the complement  $\Omega\setminus \omega$ has no compact components in 
 $\Omega$. Then there is an orthonormal basis 
  $\{ b_\nu\}$ in the space $H^{k,2s,s}_{\mathcal H} 
(\Omega_{T_1,T_2})$ such that its restriction  
$\{ b_{\nu|\omega_{T_1,T_2}}\}$ to $\omega_{T_1,T_2}$ is 
an orthogonal basis in $L^{2}_{\mathcal H} (\omega_{T_1,T_2})$. 
\end{thm}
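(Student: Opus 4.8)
The plan is to obtain the desired basis as the system of eigenvectors of a compact, self-adjoint operator, following the standard functional-analytic scheme recalled in the Introduction. Denote by
\[
A : H^{k,2s,s}_{\mathcal H}(\Omega_{T_1,T_2}) \to L^2_{\mathcal H}(\omega_{T_1,T_2})
\]
the natural operator sending each solution on the larger cylinder to its restriction to the smaller one. It is well defined, since the restriction of a solution to $\mathcal H$ is again a solution, and bounded, because the $L^2(\omega_{T_1,T_2})$-norm is dominated by the $H^{k,2s,s}(\Omega_{T_1,T_2})$-norm. I would first check the three properties needed to run the scheme: that $A$ is injective, compact, and has dense range.

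Injectivity follows from the real analyticity of solutions to the heat equation in the space variables: if $Au=0$, then for each $t$ the function $u(\cdot,t)$ is real analytic on the connected domain $\Omega$ and vanishes on the non-empty open subset $\omega$, hence $u(\cdot,t)\equiv 0$, so $u=0$. Compactness I would obtain by factoring $A$ as the composition of the embedding $H^{k,2s,s}(\Omega_{T_1,T_2})\hookrightarrow L^2(\Omega_{T_1,T_2})$, which is compact since $s\in\mathbb N$ gives a genuine gain of regularity (by the anisotropic Sobolev embedding theorems, or by the results of J.-L. Lions recalled in the Introduction), with the bounded restriction operator $L^2(\Omega_{T_1,T_2})\to L^2(\omega_{T_1,T_2})$; a compact operator composed with a bounded one is compact. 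Finally, the density of the range of $A$ is precisely Corollary~\ref{c.dense.base0}, which applies under the present hypotheses on $\omega$, $\Omega$ and the absence of compact components of $\Omega\setminus\omega$.

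Granting these, I would form the operator $A^*A$ on $H^{k,2s,s}_{\mathcal H}(\Omega_{T_1,T_2})$, which is compact, self-adjoint and non-negative, and moreover injective because $A$ is (indeed $A^*Au=0$ forces $\|Au\|^2=0$). By the spectral theorem for compact self-adjoint operators it admits an orthonormal basis of eigenvectors $\{b_\nu\}$ with strictly positive eigenvalues $\lambda_\nu$, i.e. $A^*A b_\nu=\lambda_\nu b_\nu$; these $\{b_\nu\}$ are the sought orthonormal basis of $H^{k,2s,s}_{\mathcal H}(\Omega_{T_1,T_2})$. The double orthogonality is then immediate from
\[
(A b_\mu, A b_\nu)_{L^2(\omega_{T_1,T_2})} = (A^*A b_\mu, b_\nu)_{H^{k,2s,s}(\Omega_{T_1,T_2})} = \lambda_\mu\,\delta_{\mu\nu},
\]
which shows that the restrictions $\{b_{\nu|\omega_{T_1,T_2}}\}=\{A b_\nu\}$ are mutually orthogonal in $L^2_{\mathcal H}(\omega_{T_1,T_2})$ and non-zero, since $\lambda_\nu>0$.

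It remains to check that $\{A b_\nu\}$ is \emph{complete} in $L^2_{\mathcal H}(\omega_{T_1,T_2})$, so that it is truly an orthogonal basis and not merely an orthogonal system. This I would deduce from the density of the range: since the span of $\{b_\nu\}$ is dense in $H^{k,2s,s}_{\mathcal H}(\Omega_{T_1,T_2})$ and $A$ is continuous and linear, the closed linear span of $\{A b_\nu\}$ contains $\overline{A(H^{k,2s,s}_{\mathcal H}(\Omega_{T_1,T_2}))}$, which by Corollary~\ref{c.dense.base0} equals all of $L^2_{\mathcal H}(\omega_{T_1,T_2})$. The only genuinely substantial input is the density of the range, which has already been established in Theorem~\ref{t.dense.base}; within the present argument the remaining technical point is the verification of compactness, but since it reduces to the cited embedding theorems I do not expect it to be a serious obstacle, and the proof is essentially a packaging of Corollary~\ref{c.dense.base0} into the abstract double-orthogonality construction.
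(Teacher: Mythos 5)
Your proposal is correct and follows essentially the same route as the paper: the same restriction operator (the paper's $R_{\Omega,\omega}$), injectivity from spatial real analyticity, compactness via the compact embedding into $L^2(\Omega_{T_1,T_2})$, density of the range from Theorem~\ref{t.dense.base} (Corollary~\ref{c.dense.base0}), and the Hilbert--Schmidt/spectral theorem applied to $A^*A$. The only difference is presentational: where the paper obtains compactness through the Bochner-space (Aubin--Lions) theorem of \cite{Lion69} and cites \cite{ShTaLMS} for the double-orthogonality conclusion, you invoke the embedding theorems generically and verify the orthogonality identity and the completeness of $\{Ab_\nu\}$ directly, which is an adequate substitute.
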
 

\begin{proof} By the definition, for numbers $s\in \mathbb N$ and  $k \in {\mathbb Z}_+$,  the 
space $H^{k,2s,s} _{\mathcal H} (\Omega_{T_1,T_2})$  is embedded continuously to 
the space  $L^2 _{\mathcal H} (\omega_{T_1,T_2})$. Denote by  $R_{\Omega,\omega}$ the natural 
embedding operator  
$$
R_{\Omega,\omega}: H^{k,2s,s} _{\mathcal H}(\Omega_{T_1,T_2})\to 
L^2 _{\mathcal H}(\omega_{T_1,T_2}).
$$ 
The analyticity of the solutions to the heat equation with respect to 
the space variables implies that the operator 
$R_{\Omega,\omega}$ is injective. Moreover, according to theorem 
\ref{t.dense.base}, the range of the operator $R_{\Omega,\omega}$ 
is everywhere dense  in the space $L^2 _{\mathcal H}(\omega_{T_1,T_2})$. 

By Fubini theorem, anisotropic Sobolev space 
$H^{2,1} _{\mathcal H}(\Omega_{T_1,T_2})$ is embedded continuously into the Bochner 
space ${\mathcal B}((T_1,T_2, H^2(\Omega),L^2 (\Omega))$, consisting of mappings 
$v: [T_1,T_2] \to H^2  (\Omega)$ such that  $\partial_t v\in L^2 (\Omega)$, see
 \cite[ch. 1, \S 5]{Lion69}. Rellich-Kondrashov theorem implies that the embedding   
$H^2 (\Omega) \to L^2 (\Omega)$ is compact. Using the well known theorem on the 
compact embedding for the Bochner type spaces  
(see, for instance, \cite[ch. 1, \S 5, theorem 5.1]{Lion69}), we conclude that 
the space ${\mathcal B}((T_1,T_2, H^2(\Omega),L^2 (\Omega))$ is embedded compactly 
into $L^2 ((T_1,T_2),\Omega) = L^2 (\Omega_{T_1, T_2})$. Thus, the space  
$H^{2,1} _{\mathcal H}(\Omega_{T_1, T_2})$  is embedded compactly into  
$L^2_{\mathcal H} (\Omega_{T_1, T_2})$ and, of course, into 
 $L^2 _{\mathcal H}(\omega_{T_1,T_2})$. Therefore, the space 
 $H^{k,2s,s} _{\mathcal H}(\Omega_{T_1, T_2})$  is embedded compactly into  
$L^2_{\mathcal H} (\omega_{T_1, T_2})$, too, i.e. 
the operator $R_{\Omega,\omega}$ is compact. 

Denote by $R_{\Omega,\omega}^*$ the adjoint mapping for the operator 
$R_{\Omega,\omega}$  in the theory of the Hilbert spaces, i.e. 
$R_{\Omega,\omega}^*: L^2 _{\mathcal H}(\omega_{T_1,T_2}) \to 
H^{k,2s,s} _{\mathcal H}(\Omega_{T_1,T_2}) $. By the Hilbert-Schmidt Theorem, 
there is an orthonormal basis $\{b_\nu\}$ in the space 
$H^{k,2s,s} _{\mathcal H}(\Omega_{T_1,T_2})$, consisting of the eigen-vectors 
of the compact self-adjoint operator  
 $R_{\Omega,\omega}^* R_{\Omega,\omega}: 
H^{k,2s,s} _{\mathcal H}(\Omega_{T_1,T_2}) \to H^{k,2s,s} _{\mathcal H}(\Omega_{T_1,T_2})$.  
Finally, using results of 
\cite[Example 1.9]{ShTaLMS}, we conclude that the system of vectors 
 $\{b_\nu\}$ is the basis with the double orthogonality property looked for. 
\end{proof}

\begin{rem} \label{r.integral} 
It was shown in \cite[theorem 6.5]{ShTaLMS} that the operator %$R^*R$ 
$R_{\Omega,\omega}^*R_{\Omega,\omega}$
may be identified as an integral one. Indeed, 
by the Sobolev embedding theorem follow that for 
sufficiently large $s$ and  $k$ the space $ H^{k,2s,s}_{\mathcal H} (\Omega_{T_1,T_2})$ 
is embedded continuously into the normed space of continuous functions  
$C(\overline{\Omega_{T_1,T_2}})$  on the compact $\overline{\Omega_{T_1,T_2}}$ from 
${\mathbb R}^{n+1}$. Thus, it is a Hilbert space with the reproducing kernel  
(see \cite{Ar50}). Besides, as the heat operator is hypoelliptic ,
the elements of the space $ H^{k,2s,s}_{\mathcal H} (\Omega_{T_1,T_2})$ 
are smooth on $\Omega_{T_1,T_2}$. That is why there is a kernel 
 ${\mathcal K}(x,t,y,\tau) \in C_{loc} ^\infty (\Omega_{T_1,T_2}
\times \Omega_{T_1,T_2})$ such that 
$$
u(x) = (u,{\mathcal K}(x,t,\cdot, \cdot))_{H^{k,2s,s}_{\mathcal H} (\Omega_{T_1,T_2})} \mbox{ 
for all } u \in  H^{k,2s,s}_{\mathcal H} (\Omega_{T_1,T_2}), \, (x,t) \in \Omega_{T_1,T_2}.
$$
If  $\{ e_\nu \}_{\nu =1}^{\infty}$ is an orthonormal basis in the Hilbert space 
$H^{k,2s,s}_{\mathcal H} (\Omega_{T_1,T_2})$ then for all 
$(x,t) \in \Omega_{T_1,T_2}$  we have
$$
{\mathcal K } (x,t,y,\tau)= \sum_{j =1}^{\infty}  %\overline{e_\nu (x)} 
e_j (x,t) e_j  (y,\tau),
$$ 
where the series converges in $H^{k,2s,s}_{\mathcal H} (\Omega_{T_1,T_2})$ with respect 
to variables $(x,t)$ for each pair  $(y,\tau) \in \Omega_{T_1,T_2}$. 
As a series of variables $(x,t,y,\tau) \in \Omega_{T_1,T_2}\times \Omega_{T_1,T_2}$, it 
converges uniformly on compacts from  $\Omega_{T_1,T_2}\times \Omega_{T_1,T_2}$.

Now, simple calculations show that 
$$
(R^* R u)(x,t)  = (u,{\mathcal K } (x,t,\cdot,\cdot))_{H^{k,2s,s} _{\mathcal H} 
(\Omega_{T_1,T_2})}, \, (x,t )\in \Omega_{T_1,T_2}.
$$
\end{rem}

However, it is not easy to construct an example of a basis with the double 
orthogonality property, provided by theorem  \ref{t.bdo}. A non-complete double 
orthogonal countable (trigonometric) system was constructed in \cite{KuSh} 
for cubes  $\omega$ and $\Omega$ in ${\mathbb R}^n$ if 
their centres coincide and the ratio of their edges equals to two. 
Let us indicate one more example; it is related to the case 
where the cylinder bases of   
$\omega_{T_1,T_2}$, $\Omega_{T_1,T_2}$ are concentric balls in ${\mathbb R}^n$. 

\begin{exmp}\label{ex.bdo}
Let  $0<R_1<R_2<+\infty$, and $\omega=B(0,R_1)$, $\Omega=B(0,R_2)$, 
where $B(0,R)$ is the ball of the radius $R$ in ${\mathbb R}^n$. 
In order to construct a system with the double orthogonality property 
for the cylinders $\omega_{T_1,T_2}$ and  
$\Omega_{T_1,T_2}$ we use eigenfunctions of the Laplace operator 
related to the Dirichet and the Neumann problems in 
$B(0,R_2)$. More precisely, after passing to the spherical coordinates 
 $x=r\,S(\varphi)$ with $\varphi$ being coordinates 
on the unit sphere  $\mathbb S$ in ${\mathbb R}^n$ one obtains
$$
\Delta
=\frac{1}{r^2}
\Big(\Big(r\frac{\partial}{\partial r}\Big)^2
+(n-2)\Big(r\frac{\partial}{\partial r}\Big)
-\Delta_{\mathbb S}
\Big),
%\tag7.2
$$
where $\Delta_{\mathbb  S}$ is the Laplace--Beltrami operator 
on the unit sphere  in ${\mathbb R}^n$. In order to solve 
the homogeneous equation
$$
(-\Delta+\lambda) u=0 \mbox{ in } B(0,R), 
$$ 
one usually uses Fourier method: look for $u$ in the form 
$u(r,\varphi)=g(r)h(\varphi)$, where  $g$ and $h$  
satisfy 
\begin{equation*}
\left\{
\begin{array}{lll}
\Big(\Big(r\frac{\partial}{\partial r}\Big)^2
+(n-2)\Big(r\frac{\partial}{\partial r}\Big)
-\lambda r^2
\Big)g
&=
a\,g
\\
\Delta_{\mathbb S}h
&=
a\,h,
\end{array}
\right.
\end{equation*}
with $a$ being an eigenvalue of the Laplace-Beltrami operator $\Delta_{\mathbb S}$.
It is well known that these eigenvalues equal to  $a=k(n+k-2)$, $k \in {\mathbb Z}_+ $, and 
the corresponding eigenfunctions are the spherical harmonics  
(see, for instance, \cite[ch. 4, \S 3]{TikhSamaX}). Then the first equation takes the form 
\begin{equation} \label{eq.7.4}
\Big(\Big(r\frac{\partial}{\partial r}\Big)^2
+(n-2)\Big(r\frac\partial{\partial r}\Big)
-\left(k(n+k-2)+\lambda r^2\right)
\Big)g
=0,
%\tag7.4
\end{equation}
and its solutions $g = g_k(r,\lambda)$ can be expresses via the Bessel functions 
$J_p$, $Y_p$:
$$
g_k (r,\lambda) = r^{(2-n)/2} \Big(C_1 J_{p_k} (\sqrt{\lambda} r)+ C_2 Y_{p_k} 
(\sqrt{\lambda} r)
\Big), 
$$
$$
p_k=\sqrt{(n-2)^2/4+ k^2(n+k-2)^2},
$$
with arbitrary constants  $C_1, C_2$, see \cite{Bow}, \cite[appendix 2]{TikhSamaX}).
For instance, for $\lambda=0$ we obtain $g_k(r,0)=C_1 r^k+C_2 r^{2-k-n}$; in this case  
$r^k h_k(\varphi)$ are spherical harmonics
(restrictions to the unit sphere of harmonic homogeneous polynomials 
$h_k$ of the degree $k$), forming a linear space of dimension   
$J(k)=\frac{(n+2k-2)(n+k-3)!}{k!(n-2)!}$.  Choosing an $L^2 (\partial B(0,1))$-orthonormal 
basis $\{ h^{(j)}_k(\varphi)\}$, $1\leq j \leq J(k)$, in the space of spherical functions 
we obtain a typical basis 
$\{ r^k h^{(j)}_k(\varphi)\}$ with the double orthogonality property in the 
spaces of Sobolev  harmonic functions in any ball with the centre  at the origin, see  
\cite{Sh96}. Moreover, the functions 
$$
B^{1,j}_{k,m} (r,\varphi) = r^{(2-n)/2}  J_{p_k} (\sqrt{\lambda_{k,m}^{(1)}} r) 
h^{(j)}_k(\varphi)
$$
form a system of eigenfunctions related to the Dirichlet problem for 
the Laplace operator in the ball
 $B(0,R_2)$ with the corresponding eigenvalues  
$\lambda_{k,m}^{(1)}$, where $\sqrt{\lambda_{k,m}^{(1)}}/R_2$ is the $m$-th zero 
of the Bessel function  $J_{p_k} $.  Similarly, the functions 
$$
B^{2,j}_{k,m} (r,\varphi) = r^{(2-n)/2}  J_{p_k} (\sqrt{\lambda_{k,m}^{(2)}} r) 
h^{(j)}_k(\varphi)
$$
form a system of eigenfunctions related to the Neumann problem for 
the Laplace operator in the ball $B(0,R_2)$, with the corresponding eigenvalues,
$\lambda_{k,m}^{(2)}$, where $\sqrt{\lambda_{k,m}^{(2)}}/R_2$ is the $m$-th zero 
of the derivative $J_{p_k}'$ of the Bessel function $J_{p_k} $,  see \cite[ch. 4, \S 3 and 
appendix 2]{TikhSamaX}). In the Cartesian coordinates we obviously have 
$$
B^{2,j}_{k,m} (x)  = |x|^{(2-n)/2-k}  J_{p_k} (\sqrt{\lambda_{k,m}^{(2)}} |x|) 
h^{(j)}_k( x).
$$
Clearly, the functions 
$$
E^{i,j}_{k,m} (x,t) %(r,\varphi,t)  
=e^{\lambda_{k,m}^{(i)} t} B^{i,j}_{k,m} (x) %(r,\varphi)  
\mbox{ and  } h^{(j)}_k (x) %r^k h^{(j)}_k(\varphi)  
$$
satisfy the heat equation in ${\mathbb R}^{n+1}$.  Then the double orthogonality property 
means precisely that 
for $(k,j)\ne (k',j')$ and any numbers  $m',m\in \mathbb N$ the functions
$E^{i,j}_{k,m}$ and $E^{i',j'}_{k',m'} $ %(r,\varphi,t)$
are orthogonal in the space  $H^{l,2s,s} (B(0,R) \times (T_1,T_2))$ with any 
$0<R<R_2$ and $l,s\in {\mathbb Z}_+$. This gives the possibility to choose 
from the set $\{E^{i,j}_{k,m} (x,t) ,  h^{(j)}_k(x)\}$ some finite or countable 
subsystems with non-repetitive pairs  $(k,j)$, 
$k \in {\mathbb Z}_+$, $1\leq j \leq J(k)$; obviously, 
these subsystems  have the double orthogonality property  in the spaces
$H^{l,2s,s}_{\mathcal H} (B(0,R_2) \times (T_1,T_2))$  and 
$H^{l',2s',s'}_{\mathcal H} (B(0,R_1) \times (T_1,T_2))$. Unfortunately, 
such subsystems are not complete. For example, the polynomial 
of the form  
$$
t \Delta G_{k+2} + G_{k+2} ,
$$
where $G_k$ is a homogeneous biharmonic polynomial of the degree  $k \in {\mathbb Z}_+$,  
satisfies the heat equation in ${\mathbb R}^{n+1}$. However, if any of the discussed 
above subsystems are complete $H^{2,1}_{\mathcal H} (B(0,R_2) \times (T_1,T_2))$ then 
it is possible to approximate the homogeneous harmonic polynomial  
$$
\Delta G_{k_0+2} = \Delta (t \Delta G_{k_0+2} + G_{k_0+2})
$$ 
of the degree $k_0$ in the space  $L^{2} (B(0,R_2) \times (T_1,T_2))$ 
by linear combinations of functions of the form 
$$
\Delta E^{i,j}_{k,m} (x,t) = \lambda_{k,m}^{(i)} E^{i,j}_{k,m} (x,t) ,\, 
\Delta h^{(j)}_ k \equiv 0. 
$$
This is impossible because the pairs $(k,j)$ are non-repetitive, and hence 
the linear combinations should be finite and containing the functions 
  $ E^{i_j,j}_{k_0,m_j} (x,t)$, $1\leq j \leq J(k_0)$, $m_j \in \mathbb N$, only, that 
	corresponds to the homogeneous harmonic polynomials 
$h^{(j)}_ {k_0}$ of the degree $k_0$. On the other hand, if we allow 
the pairs $(k,j)$ to enter these subsystems for different 
 $i$  and $m$, then the double orthogonality property will fail and 
we will need an additional orthogonalisation.  
\end{exmp}

\bigskip

{\bf Funding.} The research is supported by Sirius University of Science and Technology 
(project 'Spectral and Functional Inequalities of Mathematical Physics and Their Applications')

\end{document}